\date{\today}
\begin{document}

\title{Gaussian vectors with Markov property}
\author{Maciej P. Wojtkowski}
\address{University of Opole\\
 Opole, Poland}
\email{mwojtkowski@uni.opole.pl}
\date{\today}
 \subjclass[2020]{60J05, 51M04}

            \theoremstyle{plain}
\newtheorem{lemma}{Lemma}
\newtheorem{proposition}[lemma]{Proposition}
\newtheorem{theorem}[lemma]{Theorem}
\newtheorem{corollary}[lemma]{Corollary}
\newtheorem{fact}[lemma]{Fact}
   \theoremstyle{definition}
\newtheorem{definition}{Definition}
    \theoremstyle{example}
\newtheorem{example}{Example}
    \theoremstyle{remark}
\newtheorem{remark}{Remark}

\newcommand{\macierz}[4]{\scriptsize{\left[\begin{array}{cc} #1 & #2 \\ #3 & #4 \end{array}\right]}}

\begin{abstract}
  We demonstrate the parallel between the properties of Gaussian vectors
  and the Euclidean geometry. In particular we study the Markov property
  and give various equivalent Euclidean and probabilistic characterizations.
  We also give a simple Euclidean proof of the conditional maximality of
  the differential entropy for the Markov Gaussian vector
  (related to the Burg's Theorem).    
\end{abstract}

\maketitle

\section{Introduction}\label{int}

It is a math folklore that the study of 
Gaussian vectors is in some respects part of Euclidean geometry.
In this note we demonstrate this parallel by exploring
the Markov property in a Gaussian vector.

We say that a random vector $X= (X_1,X_2,\dots,X_n)$ is {\it Markov}
if for every $i=3,\dots,n$, the conditional distributions  of 
$X_i | (X_{i-1},\dots,X_1)$ and $X_i |  X_{i-1}$ coincide
with probability $1$.

The Markov property seems to involve a distinguished ``arrow of time''
in a vector. It is well known that actually it can be reformulated
as the following perfectly symmetric property.

\begin{definition}\rm
A random vector is {\it Markov} if for every $i=2,\dots, n-1$
the random vectors $Y_i = (X_1,X_2,\dots, X_{i-1})|X_i$ and
$Z_i = (X_{i+1},X_{i+2},\dots, X_{n})|X_i$ are independent with probability
$1$.
\end{definition}

Let $X$ be a random vector in $\mathbb R^n$ with Gaussian distribution
with zero mean and the nondegenerate covariance matrix $C =\{c_{ij}\} $, with
the inverse matrix
$C^{-1} =A=\{a_{ij}\}$.
Let $R= \{\rho_{ij}\}$ be the matrix of correlation coefficients,
where $\rho_{ij}$ is the correlation coefficient of the random
variables $X_i$ and $X_j$. In particular $\rho_{ii}=1$ and
$R=D^{-1}CD^{-1}$ where $D$ is the diagonal matrix with standard deviations
of the random variables $X_i$ on the diagonal.


We will establish the following theorem.

\begin{theorem}
For a Gaussian vector $X$ the following properties are equivalent

(i) $X$ is Markov.

(ii) The conditional expected values
$E(X_i\ | \ X_{i-1},\dots,X_1)$ and $E(X_i\ | \ X_{i-1})$ coincide
with probability $1$, for every $i=3,\dots,n$.

(iii) For every $1\leq k < l\leq n$ the correlation coefficient  
\[
\rho_{kl} =\rho_{k,k+1}\rho_{k+1,k+2}\dots \rho_{l-2,l-1}\rho_{l-1,l}.
\]

(iv) The matrix $R^{-1}=DAD=\{g_{ij}\}$ is given by 
$g_{ij} = 0$ if $|i-j|\geq 2$,
$g_{ii} = \alpha_i$ for $i=1,\dots, n$,
$g_{k,k+1}=\beta_k$ for $k=1,\dots, n-1$, where  
\[
\begin{aligned}
&\alpha_1=\frac{1}{1-\rho^2_{1,2}},\ 
  \alpha_n=\frac{1}{1-\rho^2_{n-1,n}}, \
  \alpha_i = \frac{1-\rho^2_{i-1,i}\rho^2_{i,i+1}}
{\left(1-\rho^2_{i-1,i}\right)
  \left(1-\rho^2_{i,i+1}\right)},\\
&\text{for}
  \ \ 2\leq  i \leq n-1, \ \   \text{and } 
  \ 
\beta_k = \frac{-\rho_{k,k+1}}
     {1-\rho^2_{k,k+1}}  \ \text{for}
   \  1 \leq k \leq n-1.
    \end{aligned}
\]

(v)
The matrix $A=\{a_{ij}\}$ is tridiagonal, i.e., $a_{ij} =0$ if  $|i-j|\geq 2$.
\end{theorem}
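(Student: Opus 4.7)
The plan is to establish the cycle of implications $(i)\Rightarrow(ii)\Rightarrow(iii)\Rightarrow(iv)\Rightarrow(v)\Rightarrow(i)$. Two of these links require essentially no work: $(i)\Rightarrow(ii)$ is just passing from equality of conditional distributions to equality of their means, and $(iv)\Rightarrow(v)$ is immediate because $R^{-1}=DAD$ with $D$ diagonal, so the tridiagonality of $R^{-1}$ and of $A$ are equivalent. The three remaining links carry the content.

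For $(ii)\Rightarrow(iii)$, I would start from the elementary Gaussian regression identity $E(X_i\mid X_{i-1})=\rho_{i-1,i}(\sigma_i/\sigma_{i-1})X_{i-1}$, where $\sigma_j$ denotes the standard deviation of $X_j$. Condition (ii) then asserts that the $L^{2}$--projection of $X_i$ onto $\mathrm{span}(X_1,\dots,X_{i-1})$ already lies in $\mathrm{span}(X_{i-1})$, which is equivalent to $X_i-\rho_{i-1,i}(\sigma_i/\sigma_{i-1})X_{i-1}$ being orthogonal to $X_j$ for every $j<i-1$. Rewriting this orthogonality in terms of correlation coefficients yields the key two-step recursion $\rho_{i,j}=\rho_{i-1,i}\,\rho_{i-1,j}$, and a short induction on the distance $|i-j|$ delivers the product formula in (iii).

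For $(iii)\Rightarrow(iv)$, the strategy is to verify directly that the tridiagonal matrix $G=\{g_{ij}\}$ specified in (iv) satisfies $GR=I$, where $R$ has entries given by the product formula. Because $G$ has only three nonzero diagonals, each entry $(GR)_{il}$ is a sum of at most three terms; the factorization $\rho_{kl}=\rho_{k,k+1}\rho_{k+1,l}$ for $k<l-1$ produces systematic telescoping that kills all off-diagonal entries, and the numerator of $\alpha_i$ is engineered to make each diagonal entry equal to $1$. The boundary rows $i=1$ and $i=n$ are treated by the special forms of $\alpha_1$ and $\alpha_n$. This algebraic verification, though elementary, is the main technical obstacle of the proof.

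To close the cycle with $(v)\Rightarrow(i)$, I would exploit the explicit form of the Gaussian density $f(x)\propto\exp(-\tfrac{1}{2}x^{T}Ax)$. Tridiagonality of $A$ makes the quadratic form $x^{T}Ax$ a sum of diagonal terms $a_{ii}x_i^{2}$ and nearest-neighbour terms $2a_{i,i+1}x_ix_{i+1}$, so $f$ factors as $\prod_{i=1}^{n}\phi_i(x_i)\prod_{i=1}^{n-1}\psi_i(x_i,x_{i+1})$. Integrating $x_{i+1},\dots,x_n$ out of this chain-structured density preserves the chain structure on $(x_1,\dots,x_i)$, and dividing the joint by the marginal density of $(X_1,\dots,X_{i-1})$ leaves a conditional density of $X_i$ whose dependence on the past enters only through the single factor $\psi_{i-1}(x_{i-1},x_i)$. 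This is exactly the Markov property in (i).
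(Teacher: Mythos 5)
Your proposal is correct and follows essentially the same route as the paper: the same cycle $(i)\Rightarrow(ii)\Rightarrow(iii)\Rightarrow(iv)\Rightarrow(v)\Rightarrow(i)$, the same regression/projection argument yielding the recursion $\rho_{i,j}=\rho_{i-1,i}\rho_{i-1,j}$ for $(ii)\Rightarrow(iii)$, and the same direct verification that the tridiagonal matrix times $R$ is the identity for $(iii)\Rightarrow(iv)$. The only cosmetic difference is in $(v)\Rightarrow(i)$: you marginalize the chain-factored density $\exp(-\tfrac{1}{2}x^{T}Ax)$ to obtain the forward Markov property, while the paper conditions on a middle coordinate $X_k$ and reads off the independence of past and future from the same factorization; both rest on the identical observation that tridiagonality of $A$ splits the quadratic form into nearest-neighbour terms.
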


Essential part of Theorem 1 was first published by Barrett and Feinsliver,
\cite{B-F}, although it was probably known to probabilists earlier. 
In Section 2 we  give a probabilistic proof of this Theorem.

The structure of the inverse of a tridiagonal symmetric matrix was described
earlier in the theorem of Gantmacher and Krein, \cite{B},
and these ideas found extensive  following in
linear algebra, \cite{V-B-G-M}.

In Section 3 we provide the dictionary allowing
the translation from probabilistic  language to the
geometric  language. 

In Section 4 we  reformulate Theorem 1 in the language of euclidean
geometry. We also give a purely geometric proof taken mostly from \cite{W},
where it was first observed.

In Section 5 we also discuss the following  theorem about maximization
of differential entropy, which is fairly easy to prove in the geometric
language.
\begin{theorem}
  Among all random vectors $X$ with distribution densities
  $f_X \in L^1(\mathbb R^n)$, with fixed variances
  $cov(X_1,X_1), cov(X_2,X_2), \dots, cov(X_{n},X_n)$, and 
  fixed correlation coefficients
  $\rho(X_1,X_2), \rho(X_2,X_3), \dots, \rho(X_{n-1},X_n)$,
  different from $\pm 1$, 
  the differential entropy
  \[
  dent(f_X) = -\int_{\mathbb R^n}f_X(x)\ln f_X(x)dx
  \]
  is maximal for the Gaussian density which is Markov.

  The maximum is strict up to the translations in $\mathbb R^n$,
  i.e., up to  the expectations of the random vector.
\end{theorem}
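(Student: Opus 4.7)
The plan is the classical Gibbs-inequality argument, exploiting the sparse structure of the Markov Gaussian supplied by Theorem 1. Let $g$ denote the zero-mean Gaussian density whose covariance matrix $C$ has the prescribed variances on the diagonal, the prescribed consecutive covariances $\rho_{i,i+1}\sqrt{c_{ii}c_{i+1,i+1}}$ on the first off-diagonal, and the remaining entries determined by the Markov assumption via the product formula in Theorem 1(iii). The hypothesis $|\rho_{i,i+1}|\neq 1$ makes the chain construction $X_1,\,X_2|X_1,\,\ldots,\,X_n|X_{n-1}$ nondegenerate, so $C$ is positive definite and $g$ is a bona fide density. By Theorem 1(v), its inverse $A=C^{-1}$ is tridiagonal.

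Since differential entropy is translation invariant and the constraints are invariant under shifting the mean, there is no loss in assuming that every admissible $f$ has zero mean. Tridiagonality of $A$ yields
\[
\ln g(x) = -\tfrac{1}{2}\sum_{i=1}^{n}a_{ii}x_i^2 - \sum_{i=1}^{n-1}a_{i,i+1}x_ix_{i+1} - \tfrac{1}{2}\ln\det(2\pi C),
\]
so that $\int f(x)\ln g(x)\,dx$ depends on $f$ only through the second moments $E_f(X_i^2)$ and $E_f(X_iX_{i+1})$---exactly the quantities fixed by the hypotheses of Theorem 2. Consequently
\[
\int f\ln g\,dx \;=\; \int g\ln g\,dx \;=\; -dent(g)
\]
for every admissible $f$.

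The Gibbs inequality (nonnegativity of the Kullback--Leibler divergence of $f$ with respect to $g$) then reads
\[
0 \;\leq\; \int f(x)\ln\frac{f(x)}{g(x)}\,dx \;=\; -dent(f) - \int f\ln g\,dx \;=\; -dent(f) + dent(g),
\]
which is the desired bound. Strict convexity of $t\mapsto t\ln t$ forces equality in Gibbs only when $f=g$ almost everywhere, yielding the stated uniqueness once the zero-mean reduction is undone, i.e.\ up to translations in $\mathbb R^n$.

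The main obstacle is essentially bookkeeping: one must notice that the nonzero pattern of the tridiagonal matrix $A$ matches \emph{precisely} the constraints of Theorem 2 (variances and consecutive correlations), so that $\int f \ln g\,dx$ becomes one and the same functional for all admissible $f$. Everything else is the standard Gaussian maximizer trick combined with a direct appeal to Theorem 1. A purely geometric proof, as the introduction advertises, would presumably reinterpret $dent(g)$ as a volume in the Euclidean model for Gaussian vectors developed in Section 3 and recast the Gibbs step in that language.
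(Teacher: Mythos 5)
Your proof is correct, but it is not the route the paper takes. You give the Choi--Cover style maximum-entropy argument: because the inverse covariance $A=C^{-1}$ of the Markov Gaussian is tridiagonal (Theorem 1, (iii)$\Rightarrow$(v)), $\ln g$ is an affine function of exactly the moments $E(X_i^2)$ and $E(X_iX_{i+1})$ that the theorem fixes, so the cross-entropy $-\int f\ln g\,dx$ is one and the same number for every admissible $f$, and the Gibbs inequality finishes the job in a single step, with strictness coming from strict convexity of $t\mapsto t\ln t$. The paper instead argues in two stages: first the generic statement that among all densities with a \emph{fully} prescribed covariance matrix the Gaussian maximizes entropy (Proposition 7, itself the same Gibbs/Jensen computation via Lemma 6), which reduces the problem to Gaussian vectors; then, since $dent(g)=\tfrac12\left(n\ln(2\pi e)+\ln\det C\right)$, it maximizes $\det C$ over the admissible covariance matrices by a Euclidean induction -- the volume of the parallelepiped spanned by unit vectors $e_1,\dots,e_n$ with fixed $\langle e_i,e_{i+1}\rangle$ is largest exactly when the projection of $e_{k+1}$ onto $span\{e_1,\dots,e_k\}$ is parallel to $e_k$, which is the Markov condition of Proposition 3(b). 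Your version is shorter and collapses the paper's two steps into one, at the cost of having to know the maximizer and its tridiagonal inverse in advance; the paper's version actually \emph{discovers} the Markov property as the solution of the volume-maximization problem and stays inside the geometric dictionary the paper is advertising. Two small points to tidy in your write-up: note that $\int f\ln g\,dx$ is finite because admissible $f$ have finite second moments (so the decomposition $\int f\ln(f/g)=\int f\ln f-\int f\ln g$ is legitimate whenever $dent(f)$ is defined), and the positive definiteness of your $C$ deserves the one-line autoregressive construction $X_{i+1}=\rho_{i,i+1}(\sigma_{i+1}/\sigma_i)X_i+W_{i+1}$ with $W_{i+1}$ independent of $X_1,\dots,X_i$ and of variance $\sigma_{i+1}^2(1-\rho_{i,i+1}^2)>0$.
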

Clearly the Markov Gaussian density is uniquely defined
by the formulas in Theorem 1 (iv) for the elements of the tridiagonal matrix
$A = C^{-1}$.

A version of Theorem 2  appears in the proof of
the Burg's Theorem given by Choi and Cover, \cite{C-C}.

\section{Probability}
We will prove Theorem 1.

$(i) \implies (ii)$
is the direct consequence of Markov property.

$(ii) \implies (iii)$

We use linear regression of $X_l$ on $X_{1},X_2,\dots.X_{l-1}$, i.e.,
we represent

$X_l=\eta_1X_{1}+\eta_{2}X_{2}+\dots +\eta_{l-1}X_{l-1}+W_l$ where
$Cov(X_{i},W_l)=0$ for $i\leq l-1$.

Since the random vector $(X_{1},X_{2},\dots, X_{l-1},W_l)$
has Gaussian distribution
we conclude that $(X_{1},X_{2},\dots, X_{l-1})$ and $W_{l}$
are independent. It follows that 
$E(W_l|X_1,X_2,\dots,X_{l-1}) = E(W_l) =0$.
  It gives us 
  \[
\begin{aligned}
  &E(X_l|X_1,X_2,\dots,X_{l-1}) =
  \\ & \eta_1X_{1}+\eta_{2}X_{2}+\dots +\eta_{l-1}X_{l-1}+
  E(W_{l}|X_1,X_2,\dots,X_{l-1})
  \\ =
  &\eta_1X_{1}+\eta_{2}X_{2}+\dots +\eta_{l-1}X_{l-1}
\end{aligned}
\]

By the assumption (ii)
$E(X_l|X_1,X_2,\dots,X_{l-1}) = E(X_l|X_{l-1})$, and hence
$\eta_1 =\eta_{2} =\dots = \eta_{l-2} =0$ and
$E(X_l|X_1,X_2,\dots,X_{l-1}) = \eta_{l-1}X_{l-1}$.

We calculate now

$E(X_{l-1}X_l) = E\left(E(X_{l-1}X_l|X_{l-1})\right) =
E\left(X_{l-1}E(X_l|X_{l-1})\right)=
\eta_{l-1}\sigma_{l-1}^2$ which leads to
$\sigma_{l-1}\eta_{l-1}= \sigma_{l}\rho_{l-1,l}$.

Finally for $k\leq l-1$
\[
\begin{aligned}
  &E(X_kX_l) = E\left(E(X_kX_l|X_{1},X_{2},\dots,X_{l-1})\right) =
  E\left(X_kE(X_l|X_{1},X_{2},\dots,X_{l-1})\right) =
\\ &=
\eta_{l-1}E\left(X_{k}X_{l-1})\right).
\end{aligned}
\]

We obtained $\rho_{k,l}\sigma_k\sigma_l =
\eta_{l-1}\rho_{k,l-1}\sigma_{k}\sigma_{l-1}$
and thus $\rho_{k,l} = \rho_{k,l-1}\rho_{l-1,l}$,
We conclude the proof by the induction on the distance of
$k$ and $l$.

$(iii) \implies (iv)$

We need to check that the matrix product $H=\{h_{ij}\} $
of the symmetric matrix described in (iv)
and the symmetric matrix $R$ 
 is equal to identity.
It takes a lot of checking, however the algebra is
rather straightforward. We will do it for some choice of
indices only. For $2\leq i < j \leq n$ we have 
\[
\begin{aligned}  
h_{ij}= -\frac{\rho_{i-1,i}\rho_{i-1,j}}{1-\rho_{i-1,i}^2}
+\frac{(1-\rho_{i-1,i}^2\rho_{i,i+1}^2)\rho_{ij}}
{(1-\rho_{i-1,i}^2)(1-\rho_{i,i+1}^2)}
-\frac{\rho_{i,i+1}\rho_{i+1,j}}
{1-\rho_{i,i+1}^2}
\end{aligned}
\]
Using (iii) we identify the common factor $\rho_{ij}$.
Extracting it we are left with the following sum, which is clearly
equal to $0$
\[
\begin{aligned}  
\frac{h_{ij}}{\rho_{ij}}= -\frac{\rho_{i-1,i}^2}{1-\rho_{i-1,i}^2}
+\frac{1-\rho_{i-1,i}^2\rho_{i,i+1}^2}
{(1-\rho_{i-1,i}^2)(1-\rho_{i,i+1}^2)}
-\frac{1}{1-\rho_{i,i+1}^2}=0.
\end{aligned}
\]
Further for $2\leq i  \leq n-1$
\[
\begin{aligned}  
h_{ii}= -\frac{\rho^2_{i-1,i}}{1-\rho_{i-1,i}^2}
+\frac{1-\rho_{i-1,i}^2\rho_{i,i+1}^2}
{(1-\rho_{i-1,i}^2)
  (1-\rho_{i,i+1}^2)}
-\frac{\rho_{i,i+1}^2}
{1-\rho_{i,i+1}^2}=1.
\end{aligned}
\]
We leave the remaining cases to the reader.
They will be also dealt with in the euclidean language in Section 2.

$(iv) \implies (v)$ is obvious.

$(v) \implies (i)$

Let us describe the conditional distribution of the
random vector

$(X_1,\dots, X_{k-1},X_{k+1},\dots, X_{n})|X_k$.
It is a Gaussian distribution with the density equal to
\[
f_k =f_k(x_1,\dots, x_{k-1},x_{k+1},\dots, x_{n}|x_k)=
const \exp(-2^{-1}\sum_{i,j=1}^na_{ij}x_ix_j)
\]
considered  as a function of $n-1$ variables
$(x_1,\dots, x_{k-1},x_{k+1},,\dots, x_{n})$, with 
$x_k$ treated  as a  parameter. Note that for this function
to be a density the constant in front must be an appropriate
function of the parameter $x_k$.
Since the matrix $A$ is tridiagonal we have
\[
f_k =
const \exp(-2^{-1}\sum_{i,j=1}^{k}a_{ij}x_ix_j)
\exp(-2^{-1}\sum_{i,j=k}^na_{ij}x_ix_j)
\exp(2^{-1}a_{kk}x_k^2)
\]
We represented the density as a product of a function
of variables $(x_1,\dots, x_{k-1})$
and a function of variables $(x_{k+1},,\dots, x_{n})$
(with $x_k$ treated as a parameter). By the well known
criterion we conclude that the random vectors
$(X_1,\dots, X_{k-1})|X_k$
and $(X_{k+1},\dots, X_{n})|X_k$
are independent.

Theorem 1 is proven.

\section{The dictionary}

  In this section we spell out the dictionary which allows the translation
  from probability to Euclidean geometry, and back, for Gaussian
  random vectors.

  In the background we have a probability space $\Omega$ with
  the $\sigma$-algebra of events $\mathcal F$, and the probability $P$.
  We consider the Hilbert space $L_0^2=L^2_0(\Omega,\mathcal F,P)$,
  of random variables with zero expected values and finite second moments.
  The inner product in $L_0^2$ is furnished by the covariance
  $Cov(\cdot, \cdot)$
  of random variables.

  A random vector $X= (X_1,X_2,\dots,X_n)$ with zero expected values
  is an array of random variables 
  in $L^2_0$.

  Assuming linear independence of the random variables we consider
  the $n$-dimensional subspace
  $Z = span\{ X_1,X_2,\dots,X_n\}
  \subset L^2_0$, which is isometric with
  the $n$-dimensional Euclidean space.

On the Euclidean side 
  we consider a sequence of linearly independent vectors $(e_1,e_2,\dots,e_n)$
  in a Euclidean space $\mathbb E^n$ with the scalar product
  $\langle \cdot, \cdot \rangle$.

Under the assumption   that 
the random vector $X= (X_1,X_2,\dots,X_n)$
is Gaussian the Euclidean data 
$C= \{c_{i,j}\} = \{Cov(X_i,X_j)\}$ gives complete information
about the joint distribution of the random vector (in $\mathbb R^n$), namely
\[
g(x) = \left((2\pi)^n\det C \right)^{-\frac{1}{2}}\exp
\left(2^{-1}\sum_{i,j=1}^{n}a_{ij}x_ix_j\right)
\]
is the density function of the distribution of a  Gaussian vector
$G$ with values in
$\mathbb R^n$, zero mean
and the covariance matrix equal to $C =A^{-1}$.

Any two random variables $ Y_0, Y_1$    from the space 
$Z = span\{ X_1,X_2,\dots,X_n\}$
have   Gaussian joint distribution. It follows that 
the orthogonality of the random variables $ Y_0, Y_1$  
equals zero correlation, which equals independence.

Further the conditional expected value
$E(Y_0|Y_1)$ is a random variable from $Z$  
  equal to the  orthogonal projection of $Y_0$
onto the line (the $1$-dimensional subspace) spanned by $Y_1$.

For a sequence $(Y_0,Y_1,Y_2,\dots,Y_k)$ of random variables from the space
$Z$ the conditional expected value
$ E(Y_0|Y_1,Y_2,\dots,Y_k)$ is the linear combination
of random variables $Y_1,Y_2,\dots,Y_k$,
  equal to the   orthogonal projection of $Y_0$
onto the subspace spanned by $Y_1,Y_2,\dots, Y_k$.

Finally we describe the conditional distribution of
$Y_0|Y_1,Y_2,\dots,Y_k$ in Euclidean terms.
It is a family of distributions which coincide after
centering, and the common centered (i.e., with $0$ expected value)
distribution is
that of $W = Y_0- E(Y_0|Y_1,\dots,Y_k)$. Indeed
$W$ is orthogonal to  $span(Y_1,\dots,Y_k)$,
and hence independent of $Y_1,Y_2,\dots, Y_k$.

Let us note that $W$ is the orthogonal projection
of $Y_0$ onto 
the orthogonal complement of $span(Y_1,\dots,Y_k)$.

\section{Euclidean geometry}
Theorem 1 has a Euclidean geometry counterpart.
Let $(e_1,e_2,\dots, e_n)$ be an ordered linear basis of unit vectors,
in general not  orthogonal, in $\mathbb R^n$ with the scalar
product $\langle \cdot,\cdot\rangle$. Let
$(f_1,f_2,\dots, f_n)$ be the dual basis, i.e.,
$\langle f_i,e_j\rangle =\delta_{ij}$, the Kronecker delta.
Further for $k=1,\dots,n$, let $S_k$ be the linear subspace spanned by the first
$k$ vectors $\{e_1,e_2,\dots,e_k\}$, and $T_k$ the linear subspace
spanned by the last $k$ vectors  $\{e_{n-k+1},\dots,e_n\}$.
The sequences of subspaces 
\[
\{0\}=S_0 \subset S_1 \subset S_2 \subset \dots
\subset S_n =\mathbb R^n, 
\{0\}=T_0 \subset T_1 \subset T_2 \subset \dots
\subset T_n =\mathbb R^n, 
\]
form two {\it flags} which have the additional property that
for every $k=1,\dots,n$ the subspaces $S_k$ and $T_{n-k}$ are
transversal. Any two flags $\mathcal S=(S_0,S_1,\dots,S_n)$
and $\mathcal T =(T_0,T_1,\dots,T_n)$ will be called {\it compatible flags}
if they have this property.

It is transparent that for any two compatible flags $\mathcal S$
and $\mathcal T$ we can
choose an ordered basis $(\pm e_1,\pm e_2,\dots, \pm e_n)$
of unit vectors so that
the flags are obtained from the basis as above. The first vector 
$e_1$ is a generator of $S_1$, and the only freedom we have is to choose
between $e_1$ and $-e_1$. Since we assume that the two flags are
compatible we get that for every $k=1,\dots ,n$ the subspace
$S_k\cap T_{n-k+1}$ has dimension $1$. The vector $e_k$ is a generator
of  the one dimensional subspace $S_k\cap T_{n-k+1}$,
and we can choose between $e_k$ and $-e_k$.
We will say that any ordered  basis of unit vectors
$(\pm e_1,\pm e_2,\dots, \pm e_n)$ is {\it adapted} to
the two compatible flags $\mathcal S, \mathcal T$.

We need to consider a family of  orthogonal projections 
associated with the subspaces in two compatible flags.
For $k= 1,\dots n-1$  we denote by $P_k$ and $Q_k$ 
the orthogonal projections onto the subspaces $S_k$ and $T_k$,
respectively. By $P_k' = I-P_k,\ Q_k' = I-Q_k$ we denote the
orthogonal projections onto the complementary subspaces.
It is clear that $P_k'$ is the orthogonal projection onto the
span of $\{f_{k+1},\dots,f_n\}$, and $Q_{n-k}'$ the orthogonal projection onto the
span of $\{f_{1},\dots,f_{k}\}$.

Further we introduce the orthogonal projections $M_k$ onto
the one dimensional subspaces $S_k\cap T_{n-k+1}$, spanned by $e_k$.
Again $M_k'=I-M_k$ is the orthogonal projection
onto the complementary subspace, which is the
span of $\{f_1,\dots, f_{k-1},f_{k+1},\dots,f_n\}$.

Now we have two compatible flags $\mathcal S, \mathcal T$,
with an ordered  basis of unit vectors  $( e_1, e_2,\dots, e_n)$
adapted to them, and the dual ordered basis $(f_1,f_2,\dots, f_n)$.

\begin{proposition}
The following are equivalent

(a) For $1\leq k \leq n-1$ and for any vector $v\in T_{n-k}$, 
  $P_kv = M_kv$.

(b) For $1\leq k \leq n-1$ the vector $P_ke_{k+1}$ is parallel to $e_k$.

(c) For $2\leq k \leq n-1$ the subspaces $M_k'S_{k-1}$
      and $M_k'T_{n-k}$ are orthogonal.

      (d) For $1\leq k \leq n-1$ and for any vector $v\in S_{k}$,
      $Q_{n-k}v = M_{k+1}v$.

(e) For $1\leq k \leq n-1$ the vector $Q_{n-k}e_{k}$ is parallel to $e_{k+1}$.
\end{proposition}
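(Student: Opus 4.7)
The plan is to exploit the order-reversal symmetry of the setup. Replacing the basis by $\tilde e_k = e_{n-k+1}$ swaps $\mathcal S\leftrightarrow\mathcal T$, hence $P_k\leftrightarrow Q_k$, and sends $M_k$ to $M_{n-k+1}$; under this involution (a) turns into (d) and (b) into (e), while (c) is self-dual. So it suffices to prove (a) $\Longleftrightarrow$ (b) $\Longleftrightarrow$ (c), and then the equivalences (c) $\Longleftrightarrow$ (d) $\Longleftrightarrow$ (e) follow by applying the same argument to the reversed basis.

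For (a) $\Longleftrightarrow$ (b): the direction (a) $\Rightarrow$ (b) is immediate, taking $v=e_{k+1}\in T_{n-k}$. For the converse, fix $k$ and prove by induction on $j\ge k+1$ that $P_ke_j\in\operatorname{span}(e_k)$; since the vectors $e_{k+1},\dots,e_n$ span $T_{n-k}$, this yields (a). The base $j=k+1$ is (b). For $j>k+1$, split $e_j = P_{j-1}e_j + (I-P_{j-1})e_j$; by (b) at index $j-1$ the first summand is a scalar multiple of $e_{j-1}$, while the second is orthogonal to $S_{j-1}\supseteq S_k$ and so killed by $P_k$. Hence $P_ke_j$ is a scalar multiple of $P_ke_{j-1}$, which lies in $\operatorname{span}(e_k)$ by the inductive hypothesis.

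For (b) $\Longleftrightarrow$ (c): write $\rho_{ij}=\langle e_i,e_j\rangle$. Since $M_k'$ is a self-adjoint idempotent and $e_k$ a unit vector, a short calculation gives
\[
\langle M_k' v, M_k' w\rangle = \langle v,w\rangle-\langle v,e_k\rangle\langle e_k,w\rangle,
\]
so (c), applied to $v=e_j$ ($j<k$) and $w=e_l$ ($l>k$), unfolds to the correlation identity $\rho_{jl}=\rho_{jk}\rho_{kl}$ for every $j<k<l$. On the other hand (b) asserts $e_{k+1}-\rho_{k,k+1}e_k\perp S_k$, i.e.\ $\rho_{j,k+1}=\rho_{j,k}\rho_{k,k+1}$ for all $j\le k-1$, which is the case $l=k+1$ of (c); so (c) $\Rightarrow$ (b). Conversely, iterating (b) telescopes to $\rho_{jl}=\rho_{j,j+1}\rho_{j+1,j+2}\cdots\rho_{l-1,l}$ for any $j<l$, and substituting this expansion into both sides of the putative identity $\rho_{jl}=\rho_{jk}\rho_{kl}$ shows the two products coincide, giving (c).

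The main obstacle is not any single technical step but the compact bookkeeping: once one has noted the involution $e_k\mapsto e_{n-k+1}$ and the identity for $\langle M_k'v,M_k'w\rangle$, each implication reduces either to a one-line induction on $j$ or to the telescoping of consecutive correlation coefficients already familiar from the passage (ii) $\Rightarrow$ (iii) of Theorem 1.
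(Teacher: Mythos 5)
Your proof is correct, and it reaches the same three core equivalences before invoking the same order-reversal symmetry for (d) and (e), but the route through condition (c) is genuinely different from the paper's. The shared part: your induction for (b) $\Rightarrow$ (a) (splitting $e_j=P_{j-1}e_j+(I-P_{j-1})e_j$ and using nestedness of the $S_k$) is the same mechanism as the paper's identity $P_ke_j=P_kP_{k+1}\cdots P_{j-1}e_j$; only note that at the end you should add the one-line observation that $P_kv\in\operatorname{span}(e_k)$ together with $v-P_kv\perp e_k$ forces $P_kv=\langle v,e_k\rangle e_k=M_kv$. The divergent part: the paper handles (c) by staying coordinate-free --- it expands $e_j$ in the dual basis, shows $M_k'e_j=P_k'e_j$, and concludes orthogonality to $S_k\supset M_k'S_{k-1}$, then closes the cycle with a direct (c) $\Rightarrow$ (a) via $P_kv=M_kv+P_k(M_k'v)$. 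You instead compute the Gram identity $\langle M_k'v,M_k'w\rangle=\langle v,w\rangle-\langle v,e_k\rangle\langle e_k,w\rangle$ and unfold (c) into the scalar factorization $\rho_{jl}=\rho_{jk}\rho_{kl}$ for $j<k<l$, which you then match against the telescoped product coming from (b). This buys an explicit bridge to the correlation product formula (condition (2) of Theorem 4, resp.\ (iii) of Theorem 1), which the paper only extracts later from property (e); the cost is more scalar bookkeeping and a reliance on the unit-vector normalization, whereas the paper's dual-basis argument is basis-computation-free and gives (c) $\Rightarrow$ (a) without passing back through (b).
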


We will prove the equivalence of (a), (b) and (c).
Since the role of the two flags is perfectly
symmetric in (c), it follows immediately that (c), (d) and (e)
are also equivalent.

To clarify the meaning of this  Proposition, and before we prove
it in the geometric language, let us translate it into
the language of Probability, using our dictionary.

\begin{proposition}
  For a nondegenerate Gaussian random vector $(X_1,X_2,\dots,X_n)$
  the following are equivalent

(a) For $1\leq k \leq n-1$ and for any $Y\in span\{ X_{k+1},\dots,X_n\}$
\[
E(Y |  X_1,X_2,\dots, X_{k}) = E(Y |  X_{k}).
\]

(b) For $2\leq k \leq n-1$
\[
E(X_{k+1} |  X_1,X_2,\dots, X_{k}) = aX_k
\]
for some scalar $a$.

(c) For $2\leq k \leq n-1$ and any $Y_1 \in span\{ X_{1},\dots,X_{k-1}\}$
and $Y_2 \in span\{ X_{k+1},\dots,X_{n}\}$
the conditional random variables $Y_1|X_k$ and $Y_2|X_k$
are independent with probability $1$.

(d) For $1\leq k \leq n-1$ and for any $Y\in span\{ X_{1},\dots,X_{k}\}$
\[
E(Y |  X_{k+1},X_{k+2},\dots, X_{n}) = E(Y |  X_{k+1}).
\]

(e) For $2\leq k \leq n-1$
\[
E(X_{k} |  X_{k+1},X_{k+2},\ dots, X_{n}) = aX_{k+1}
\]
for some scalar $a$.
\end{proposition}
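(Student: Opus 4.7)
The plan is to derive Proposition 3 directly from Proposition 2 by invoking the dictionary of Section 3; no separate probabilistic argument is needed. First I fix the isometry sending the $n$-dimensional subspace $Z=\mathrm{span}\{X_1,\ldots,X_n\}\subset L_0^2$ onto a Euclidean space $\mathbb{E}^n$ by $X_i/\sigma_i\mapsto e_i$, where $\sigma_i$ is the standard deviation of $X_i$. Nondegeneracy of $X$ guarantees that $(e_1,\ldots,e_n)$ is an ordered basis of unit vectors, so it generates a pair of compatible flags $\mathcal{S},\mathcal{T}$ in the sense of Section~4, making Proposition~2 applicable.

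Then I translate each clause using Section~3. The projection $P_k$ onto $S_k$ corresponds to $E(\,\cdot\mid X_1,\ldots,X_k)$, the projection $M_k$ onto $\mathrm{span}\{e_k\}$ corresponds to $E(\,\cdot\mid X_k)$, and $Q_{n-k}$ onto $T_{n-k}=\mathrm{span}\{e_{k+1},\ldots,e_n\}$ corresponds to $E(\,\cdot\mid X_{k+1},\ldots,X_n)$. Applied to $v\in T_{n-k}$, clause~(a) of Proposition~2 becomes clause~(a) of Proposition~3 verbatim. Clause~(b) of Proposition~2 says $P_k e_{k+1}\parallel e_k$; this translates to $E(X_{k+1}\mid X_1,\ldots,X_k)=aX_k$, matching clause~(b) of Proposition~3 (the case $k=1$ is trivial on both sides). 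Clauses~(d) and (e) follow by the symmetry between the two flags already noted after Proposition~2.

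The one clause that requires care is (c). Proposition~2(c) asserts $M_k' S_{k-1}\perp M_k' T_{n-k}$, where $M_k'=I-M_k$ projects onto the orthogonal complement of $\mathrm{span}\{e_k\}$. Under the dictionary, for $Y_1\in\mathrm{span}\{X_1,\ldots,X_{k-1}\}$ and $Y_2\in\mathrm{span}\{X_{k+1},\ldots,X_n\}$ we have $M_k' Y_i=Y_i-E(Y_i\mid X_k)$; the closing remarks of Section~3 identify the distribution of each such residual with the common centered version of the conditional distribution $Y_i\mid X_k$. Orthogonality of the residuals in $Z$ is zero covariance, and joint Gaussianity upgrades zero covariance to independence; since each residual is additionally independent of $X_k$, mutual independence of the two residuals is equivalent to the conditional random variables $Y_1\mid X_k$ and $Y_2\mid X_k$ being independent, which is precisely Proposition~3(c).

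The principal obstacle lies in clause~(c): one must argue carefully that in the Gaussian setting independence of the residuals $Y_i - E(Y_i\mid X_k)$ encodes conditional independence of $Y_1,Y_2$ given $X_k$, rather than merely some weaker property like conditional uncorrelatedness. The remaining clauses reduce to a term-by-term reading of the dictionary, so no genuine computation is required beyond Proposition~2 itself.
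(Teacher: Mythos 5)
Your proposal is correct and takes essentially the same route as the paper: the author presents this probabilistic proposition precisely as the dictionary translation of the Euclidean proposition, proving only the latter directly. You have simply made the translation explicit, correctly isolating the one nontrivial point, namely that in clause (c) joint Gaussianity upgrades orthogonality of the residuals $Y_i - E(Y_i\mid X_k)$ to conditional independence given $X_k$.
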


\begin{proof}(of Proposition 3)
  It is obvious that (a) implies (b).

 To prove (c) we observe that for any $j > k$, using (b), we get
$P_ke_j = P_kP_{k+1}\dots P_{j-1}e_j= ae_k$, for some scalar $a$.
Further 
$P_ke_j =ae_k = M_k e_j$. Indeed we can prove by induction
on the distance between the indices $k$ and $j$ that
\[
  e_{j} = a e_k + a_{k+1}f_{k+1}+a_{k+2}f_{k+2}+\dots + a_{n}f_{n},
  \]
  for some scalars $a_{k+1},\dots, a_n$.

  Further $M_ke_j+M_k'e_j= e_j= P_ke_j+P_k'e_j$. It follows that  
  \[
  M_k'e_j= a_{k+1}f_{k+1}+a_{k+2}f_{k+2}+\dots + a_{n}f_{n}=P_k'e_j,
  \]
  and hence $M_k'e_j$ is orthogonal to $S_k\supset M_k'S_k
  \supset M_k'S_{k-1}$. To summarize, we have established for any
  $e_j, j\geq k+1$,
  that $M_k'e_j$ is orthogonal to $M_k'S_{k-1}$. It follows that 
  the whole subspace $M_k'T_{n-k}$ is orthogonal to $M_k'S_{k-1}$.

To obtain (a) from (c) we observe that for $v\in T_{n-k}$, 
$P_kv = P_k(M_kv +M_k'v) = M_kv + P_k(M_k'v)$. Since $M_k'v$ is orthogonal to
$e_k$ and to $M_k'S_{k-1}$, then $M_k'v$ is orthogonal to $S_k$. Hence  
$P_k(M_k'v) =0$.
\end{proof}

We will say that an ordered basis $( e_1, e_2,\dots, e_n)$, or the
compatible pair of flags $\mathcal S,\mathcal T$, is {\it Markov}
if it satisfies any of the equivalent properties in Proposition 3.

\begin{theorem}
The following are equivalent
  
  (1) The ordered basis of unit vectors   $( e_1, e_2,\dots, e_n)$
  is Markov.
  
(2) For every $1\leq k < l\leq n$   
\[
\langle e_k,e_l\rangle = \langle e_{k},e_{k+1}\rangle\langle e_{k+1},e_{k+2}\rangle
\dots \langle e_{l-2},e_{l-1}\rangle\langle e_{l-1},e_{l}\rangle.
\]

(3) The dual basis $(f_1,f_2,\dots, f_n)$ is equal to
\[
f_i = \beta_{i-1}e_{i-1}+\alpha_{i}e_i+\beta_ie_{i+1}, i =1,\dots, n,
\]
where we use the convention that $e_0=e_{n+1}=0$ and the coefficients 
\[
\alpha_i = \frac{1-\langle e_{i-1},e_{i}\rangle^2
  \langle e_{i},e_{i+1}\rangle^2}
{\left(1-\langle e_{i-1},e_{i}\rangle^2\right)
  \left(1-\langle e_{i},e_{i+1}\rangle^2\right)},\
\beta_i = \frac{-\langle e_{i},e_{i+1}\rangle}
{1-\langle e_{i},e_{i+1}\rangle^2},
\]

(4) $\langle f_{i},f_{j}\rangle=0$ if $|i-j| \geq 2$.
\end{theorem}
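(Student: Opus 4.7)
The plan is to close the cycle $(1)\Rightarrow(2)\Rightarrow(3)\Rightarrow(4)\Rightarrow(1)$. Each implication mirrors the corresponding step in Theorem~1, but carried out directly in the ambient Euclidean space using Proposition~3 and the defining relation $\langle f_i,e_j\rangle = \delta_{ij}$. A key preliminary observation, used repeatedly, is that duality together with a dimension count gives $\operatorname{span}\{f_1,\dots,f_{k-1}\} = T_{n-k+1}^\perp$ and $\operatorname{span}\{f_{k+1},\dots,f_n\} = S_k^\perp$.

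For $(1)\Rightarrow(2)$, I would feed $v = e_l \in T_{n-k}$ (with $l>k$) into Proposition~3(a): Markov then gives $P_k e_l = M_k e_l = \langle e_l,e_k\rangle e_k$, and pairing with any $e_j\in S_k$ yields $\langle e_j,e_l\rangle = \langle e_j,e_k\rangle\langle e_k,e_l\rangle$. Taking $k=l-1$ and inducting on $l-j$ produces the telescoping product in (2). Step $(2)\Rightarrow(3)$ is a direct algebraic verification that the proposed $f_i = \beta_{i-1}e_{i-1} + \alpha_i e_i + \beta_i e_{i+1}$ pairs correctly with each $e_j$: invoking (2) lets one extract the common factor $\langle e_{\min(i,j)},e_{\max(i,j)}\rangle$ and reduces matters to the same rational identities already handled in the proof of $(iii)\Rightarrow(iv)$ of Theorem~1, so I would exhibit one interior case and sketch the endpoint cases $i\in\{1,n\}$. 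Step $(3)\Rightarrow(4)$ is immediate: the tridiagonal formula for $f_j$ together with $\langle f_i,e_m\rangle = \delta_{im}$ forces $\langle f_i,f_j\rangle = 0$ whenever $|i-j|\geq 2$.

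The geometric heart of the argument is $(4)\Rightarrow(1)$. Set $V_{k-1} = \operatorname{span}\{f_1,\dots,f_{k-1}\}$ and $U_k = \operatorname{span}\{f_{k+1},\dots,f_n\}$; by duality both are orthogonal to $e_k$, and by (4) every pair $(f_i,f_j)$ with $i\leq k-1$ and $j\geq k+1$ satisfies $|i-j|\geq 2$, so $V_{k-1}\perp U_k$. A dimension count $(k-1)+(n-k)+1 = n$ then upgrades this to the \emph{orthogonal} decomposition
\[
\mathbb R^n = V_{k-1} \oplus U_k \oplus \mathbb R e_k.
\]
Given $v\in T_{n-k}\subset T_{n-k+1} = V_{k-1}^\perp$, the $V_{k-1}$ component of $v$ vanishes, so $v = P_{U_k}v + M_k v$; projecting onto $S_k$ and using $U_k = S_k^\perp$ gives $P_k v = M_k v$, which is Proposition~3(a). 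I expect the main obstacle to be conceptual rather than computational: a naive direct attempt at $(4)\Rightarrow(2)$ lands one inside the classical Gantmacher--Krein structure theorem for inverses of tridiagonal matrices, whereas the orthogonal decomposition above bypasses inversion entirely and goes straight back to Proposition~3(a). A secondary nuisance is the arithmetic of $(2)\Rightarrow(3)$ together with the separate bookkeeping required at the indices $i=1$ and $i=n$, where the formula for $\alpha_i$ specializes.
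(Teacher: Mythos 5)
Your proposal is correct, and steps $(1)\Rightarrow(2)$, $(2)\Rightarrow(3)$, $(3)\Rightarrow(4)$ essentially coincide with the paper's (you use Proposition~3(a) where the paper uses the mirror-image property (e) for $(1)\Rightarrow(2)$, and you pair the tridiagonal expression for $f_j$ directly against $f_i$ where the paper reads the coefficients off the expansion $f_i=\sum_j\langle f_i,f_j\rangle e_j$; these are cosmetic differences). The genuine divergence is in $(4)\Rightarrow(1)$. The paper proves Proposition~3(b) by a downward induction on $k$: it expands $f_k$ in the $e$-basis using (4), applies $P_k$, and must argue separately that the coefficient of $e_k$ in $P_kf_k$ is nonzero before it can pass to $P_{k-1}e_k$. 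Your argument instead establishes Proposition~3(a) in one stroke from the orthogonal splitting $\mathbb R^n=\operatorname{span}\{f_1,\dots,f_{k-1}\}\oplus\mathbb Re_k\oplus\operatorname{span}\{f_{k+1},\dots,f_n\}$, which is orthogonal precisely because of (4) together with $\langle f_i,e_k\rangle=0$ for $i\neq k$; the identifications $\operatorname{span}\{f_1,\dots,f_{k-1}\}=T_{n-k+1}^{\perp}$ and $\operatorname{span}\{f_{k+1},\dots,f_n\}=S_k^{\perp}$ (already recorded in the paper when introducing $P_k'$ and $Q_{n-k}'$) then give $P_kv=M_kv$ for $v\in T_{n-k}$ immediately. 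This buys a global, induction-free argument with no non-degeneracy case analysis, and it makes visible why (4) is the ``right'' symmetric condition: it is exactly the statement that the dual basis splits orthogonally around each $e_k$. The paper's induction, by contrast, stays entirely inside the chain of projections $P_{n-1},P_{n-2},\dots$ and produces the parallelism $P_ke_{k+1}\parallel e_k$ explicitly, which is closer in spirit to the recursive structure one meets in the Gantmacher--Krein theorem. Both are complete proofs; yours is the cleaner route for this step.
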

\begin{proof}
  $(1)\implies (2)$
  We will use the property (e) in Proposition 2. It follows that
  for any $1\leq k < l \leq n$ we have
  $e_k = Q_{n-k}e_k+Q_{n-k}'e_k$ with  $Q_{n-k}e_k =
  \langle e_{k},e_{k+1}\rangle e_{k+1}$. Since 
   $Q_{n-k}'e_k$ is orthogonal to $e_l$ we conclude that
  $\langle e_{k},e_{l}\rangle=
  \langle e_{k},e_{k+1}\rangle
  \langle e_{k+1},e_{l}\rangle$. Now we obtain (2) by induction on the
  distance of $k$ and $l$.

  $(2)\implies (3)$ 
  By the uniqueness of the dual basis
  it is sufficient to check that with the given formulas for $f_i$ 
  we have $\langle f_{i},e_{j}\rangle = \delta_{ij}$. The algebra is
  the same as in the proof of Theorem 1.
  We start with the calculation of $  \langle f_{i},e_{i}\rangle$
for $1\leq i\leq n$. We get
\[
\begin{aligned} 
\frac{-\langle e_{i-1},e_{i}\rangle^2}
{1-\langle e_{i-1},e_{i}\rangle^2}+\frac{1-\langle e_{i-1},e_{i}\rangle^2
  \langle e_{i},e_{i+1}\rangle^2}
{(1-\langle e_{i-1},e_{i}\rangle^2)
  (1-\langle e_{i},e_{i+1}\rangle^2)}
+\frac{-\langle e_{i},e_{i+1}\rangle^2}
{1-\langle e_{i},e_{i+1}\rangle^2}=1.
\end{aligned}
\]
We proceed with the calculation of   $\langle f_{i},e_{j}\rangle$ for
$1\leq i< j\leq n$. It is equal to  
\[
\begin{aligned}
-\frac{\langle e_{i-1},e_{i}\rangle\langle e_{i-1},e_{j}\rangle}
{1-\langle e_{i-1},e_{i}\rangle^2}+\frac{\left(1-\langle e_{i-1},e_{i}\rangle^2
  \langle e_{i},e_{i+1}\rangle^2\right) \langle e_{i},e_{j}\rangle}
{\left(1-\langle e_{i-1},e_{i}\rangle^2\right)
  \left(1-\langle e_{i},e_{i+1}\rangle^2\right)}
-\frac{\langle e_{i},e_{i+1}\rangle\langle e_{i+1},e_{j}\rangle}
{1-\langle e_{i},e_{i+1}\rangle^2}.
\end{aligned}
\]
Using (2) we identify the common factor $\langle e_{i},e_{j}\rangle$.
Extracting it we are left with the following sum, which is clearly
equal to $0$
\[
\begin{aligned}
-\frac{\langle e_{i-1},e_{i}\rangle^2}
{1-\langle e_{i-1},e_{i}\rangle^2}+\frac{1-\langle e_{i-1},e_{i}\rangle^2
  \langle e_{i},e_{i+1}\rangle^2}
{\left(1-\langle e_{i-1},e_{i}\rangle^2\right)
  \left(1-\langle e_{i},e_{i+1}\rangle^2\right)}
-\frac{1}
{1-\langle e_{i},e_{i+1}\rangle^2}=0.
\end{aligned}
\]

$(3)\implies (4)$ 
  In general for dual bases 
  $( e_1, e_2,\dots, e_n)$ and $(f_1,f_2,\dots, f_n)$
  we have
  \[
  f_{i}= \sum_{j=1}^n\langle f_{i},f_{j}\rangle e_j,\ \ \ \ \ \ \ \ \ \ \ (*)
  \]
  and the Gram matrix $\{\langle f_{i},f_{j}\rangle\}$ is the inverse
  of $\{\langle e_{i},e_{j}\rangle\}$.
 Hence (4) follows immediately from (3).

 $(4)\implies (1)$

 We will prove (b) of Proposition 3 by induction on the index $k$. 
 We have that $f_n$ is orthogonal to $S_{n-1}$ and using $(*)$ we get 
 $\langle f_{n},f_{n}\rangle e_n =f_n - \langle f_{n},f_{n-1}\rangle e_{n-1}$.
 Applying the projection  $P_{n-1}$ to both sides we get
 $\langle f_{n},f_{n}\rangle P_{n-1}e_n =- \langle f_{n},f_{n-1}\rangle e_{n-1}$.
 So we have (b) for $k=n-1$.

 Recall that $P_{k}$  is the orthogonal projection onto the span
 of vectors $\{e_{1},\dots,e_{k}\}$, and $f_{k+1}$ is orthogonal
 to all these vectors.
 Given that $P_{k}e_{k+1}=ae_{k}$ for some coefficient $a$,
 we apply the projection $P_{k}$ to both sides of the equation
\[ 
f_{k} =\langle f_{k},f_{k-1}\rangle e_{k-1}+\langle f_{k},f_{k}\rangle e_{k}+
\langle f_{k},f_{k+1}\rangle e_{k+1}.
\]
We obtain 
\[ 
P_{k}f_{k} = \langle f_{k},f_{k-1}\rangle e_{k-1}+
\left(\langle f_{k},f_{k}\rangle+\langle f_{k},f_{k+1}\rangle a\right)
e_{k}.
\]
The coefficient with $e_{k}$ cannot be $0$ since otherwise
we would have

$P_{k}f_{k} =
\langle f_{k},f_{k-1}\rangle e_{k-1}$, and further
\[
0 = \langle f_{k}, e_{k-1}\rangle=
\langle f_{k}, P_{k}e_{k-1}\rangle=\langle P_{k}f_{k}, e_{k-1}\rangle= 
\langle f_{k},f_{k-1}\rangle,
\]
so that $P_{k}f_{k} = 0$.  We get the contradictory conclusion 
that $f_{k}$ is orthogonal to $e_{k}$.

We can thus write $P_{k}f_{k} =
\langle f_{k},f_{k-1}\rangle e_{k-1}+be_{k}$, for some coefficient $b\neq 0$.
Applying $P_{k-1}$ to both sides of the last equation we get 
$0=\langle f_{k},f_{k-1}\rangle e_{k-1}+bP_{k-1}e_{k}$, which completes
the induction.

\end{proof}

\section{Differential entropy}

For a random vector $X$ with a density function
$f\in L_1(\mathbb R^n)$ the {\it differential entropy}
is equal to
\[
dent(f) = -\int_{\mathbb R^n} f(x)\ln f(x)dx,
\]
if $f\ln f \in L_1(\mathbb R^n)$ (otherwise it is not defined).

The differential entropy is not changed if we add a constant vector to $X$,
so without loss of generality we can assume that the random vector has
zero expected values.

It is  a well known fact that the Gaussian distribution
maximizes the differential entropy among all densities with the same
covariance matrix $C$. For the convenience of the reader we provide
a complete proof essentially taken from the book by Cover and Thomas,
 \cite{C-T}.

Let $g(x) = (\sqrt {(2\pi)^n\det C })^{-1}
\exp\left(-\frac{1}{2}\langle Ax,x\rangle\right)$ be the density function
of a Gaussian vector $G$ with values in
$\mathbb R^n$, zero mean
and the covariance matrix equal to $C =A^{-1}$.
\begin{lemma}
  If the covariance matrices of a random vector $X$ with density $f$
  and the Gaussian vector $G$ with density $g$ coincide then
  \[
  -\int_{\mathbb R^n} f(x)\ln g(x)dx =   dent(g) =
  \frac{1}{2}\left(n\ln(2\pi e) + \ln\det C\right).
  \]
\end{lemma}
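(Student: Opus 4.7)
The plan is to exploit the fact that $-\ln g(x)$ is an affine function of $x_ix_j$, so that $-\int f(x)\ln g(x)\,dx$ depends on $f$ only through its second moments (given zero mean). Writing out the Gaussian density,
\[
-\ln g(x) = \tfrac{1}{2}\ln\bigl((2\pi)^n\det C\bigr) + \tfrac{1}{2}\sum_{i,j=1}^n a_{ij}x_ix_j.
\]
First I would integrate this against $f$ to get
\[
-\int_{\mathbb R^n} f(x)\ln g(x)\,dx = \tfrac{1}{2}\ln\bigl((2\pi)^n\det C\bigr) + \tfrac{1}{2}\sum_{i,j=1}^n a_{ij}\int_{\mathbb R^n} x_ix_j f(x)\,dx.
\]
Since $f$ has zero mean and covariance matrix $C = \{c_{ij}\}$, the last integral equals $c_{ij}$, so the sum becomes $\operatorname{tr}(AC) = \operatorname{tr}(I) = n$.

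Next, performing exactly the same computation with $f$ replaced by $g$ (whose covariance is also $C$ by hypothesis) shows that $-\int g(x)\ln g(x)\,dx$ equals the same expression; this gives the first equality in the lemma. Assembling the pieces,
\[
-\int_{\mathbb R^n} f(x)\ln g(x)\,dx = \tfrac{1}{2}\ln\bigl((2\pi)^n\det C\bigr) + \tfrac{n}{2} = \tfrac{1}{2}\bigl(n\ln(2\pi e) + \ln\det C\bigr),
\]
which simultaneously identifies the common value with $dent(g)$ and evaluates it explicitly.

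There is no real obstacle here; the only subtlety is bookkeeping, namely observing that the assumption of coinciding covariances (plus zero mean) is precisely what is needed to make the integral of the quadratic form $\langle Ax,x\rangle$ against $f$ match its integral against $g$, and that $\operatorname{tr}(AC)=n$ because $A = C^{-1}$.
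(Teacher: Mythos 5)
Your proposal is correct and follows essentially the same route as the paper: expand $-\ln g(x)$ into its constant plus quadratic part, integrate the quadratic form against $f$ using only the second moments, and observe that $\sum_{i,j}a_{ij}c_{ij}=\operatorname{tr}(AC)=n$, with the same computation applied to $g$ yielding the identification with $dent(g)$.
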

\begin{proof}
  Integrating the quadratic form $\langle Ax,x\rangle = \sum_{i,j}a_{i,j}x_ix_j$
  against the density $f$ we get
  \[
 \sum_{i,j}a_{ij}\int_{\mathbb R^n} f(x)x_ix_jdx = \sum_{i,j}a_{ij}\int_{\mathbb R^n} g(x)x_ix_jdx =  \sum_{i,j}a_{ij}c_{ij} = n.
 \]
 We conclude that 
\[
\begin{aligned}
&  -\int_{\mathbb R^n} f(x)\ln g(x)dx =\frac{1}{2}(n\ln(2\pi) + \ln\det C) +
  \frac{1}{2} \sum_{i,j}a_{ij}\int_{\mathbb R^n} f(x)x_ix_jdx\\ 
  &=   \frac{1}{2}(n\ln(2\pi e) + \ln\det C).
\end{aligned}
\]
\end{proof}

We are ready to prove that the Gaussian distribution
maximizes the differential entropy among all densities with the same
covariance matrix $C$.

\begin{proposition}
  If a random vector $X$ with density $f$ has the same covariance matrix
  $C$ as the
  Gaussian vector $G$ with density $g$ then
  \[
  dent(f) \leq dent(g)
  \]
  and the equality holds only if $f =g$.
\end{proposition}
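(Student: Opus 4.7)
The plan is to reduce the inequality $dent(f)\le dent(g)$ to the non-negativity of the Kullback--Leibler divergence between $f$ and $g$. Using Lemma 4 to replace $dent(g)$ by $-\int f \ln g$ (which is permissible because $\ln g$ is an affine function of the quadratic form $\langle Ax,x\rangle$, and the required second moments exist by hypothesis), I would write
\[
dent(g) - dent(f) = -\int_{\mathbb R^n} f(x)\ln g(x)\,dx + \int_{\mathbb R^n}f(x)\ln f(x)\,dx = \int_{\mathbb R^n} f(x)\ln\frac{f(x)}{g(x)}\,dx.
\]
So it suffices to show that this last integral, the relative entropy $D(f\|g)$, is non-negative and vanishes only when $f = g$ almost everywhere.

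The core estimate is the elementary inequality $\ln t \leq t - 1$ for $t > 0$, which is strict unless $t = 1$ (an immediate consequence of the strict concavity of $\ln$). Applying it with $t = g(x)/f(x)$ on the set $\{f > 0\}$, multiplying by $f(x)$ and integrating, I obtain
\[
-\int_{\{f>0\}} f(x)\ln\frac{g(x)}{f(x)}\,dx \;\geq\; \int_{\{f>0\}}\bigl(f(x) - g(x)\bigr)\,dx \;=\; 1 - \int_{\{f>0\}} g(x)\,dx \;\geq\; 0,
\]
because $\int f = 1$ and $\int_{\{f>0\}} g \leq \int_{\mathbb R^n} g = 1$. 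The left-hand side equals $D(f\|g)$ under the standard convention $0\ln 0 = 0$ on $\{f=0\}$, yielding $dent(g) \geq dent(f)$.

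For the equality case, $D(f\|g) = 0$ forces both the pointwise inequality $\ln(g/f) \leq g/f - 1$ and the inequality $\int_{\{f>0\}} g \leq 1$ to be tight. Strictness of $\ln t < t-1$ away from $t = 1$ gives $g(x) = f(x)$ for almost every $x\in\{f>0\}$, whence $\int_{\{f>0\}} g = \int_{\{f>0\}} f = 1$, and therefore $g = 0$ almost everywhere on $\{f = 0\}$. Together these conclusions give $f = g$ almost everywhere on $\mathbb R^n$. The only mild technicality is the standard handling of the set $\{f=0\}$ in defining $f\ln(f/g)$, which is not a serious obstacle; no further machinery is required.
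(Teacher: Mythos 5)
Your argument is correct and follows the same overall strategy as the paper: both proofs first use the lemma on cross-entropy (that $-\int f\ln g = dent(g)$, which holds because $\ln g$ is a quadratic polynomial and $f$ and $g$ share second moments) to rewrite $dent(g)-dent(f)$ as the relative entropy $\int f\ln(f/g)$, and then show this quantity is non-negative with equality only for $f=g$. The single point of divergence is the mechanism for that last step: the paper applies Jensen's inequality for the strictly convex function $-\ln$ to the random variable $h(X)=g(X)/f(X)$, obtaining $0\le -\ln E(h(X))\le E(-\ln h(X))$ since $E(h(X))=\int_{\{f>0\}}g\le 1$, whereas you integrate the pointwise tangent-line bound $\ln t\le t-1$. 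The two devices are interchangeable here and equally standard; if anything, your route is slightly more elementary, and your equality analysis is more explicit than the paper's one-line appeal to strict convexity, since you separately extract $g=f$ a.e.\ on $\{f>0\}$ and $g=0$ a.e.\ on $\{f=0\}$ (the latter being needed in both proofs and stated only implicitly in the paper). No gaps.
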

\begin{proof}
  Given a density function $f$
  let us introduce an auxiliary function $h(x) =\frac{g(x)}{f(x)}$, 
well defined in the support subset $S$ of the density $f$, i.e.,
$S =\{x\in \mathbb R^n| f(x) >0\}$. We extend the definition of $h(x)$
to all of $\mathbb R^n$ by assigning it a fixed value, say $1$, outside of $S$.
We have $E(h(X)) = \int_{S} f(x) \frac{g(x)}{f(x)}dx =
\int_{S}  g(x)dx \leq 1$.
We apply the Jensen inequality to the convex function $-\ln(y)$ and
the random variable $Y= h(X)$. Using Lemma 5 we get
\[
0\leq -\ln E(Y) \leq E(-\ln(Y)) =  -\int_{S} f(x)\ln \frac{g(x)}{f(x)}dx =
dent(g) -dent(f).
\]
To end the proof we recall that  for a strictly
convex function the Jensen inequality becomes equality
only if the random variable is constant.
\end{proof}

Finally we discuss random vectors $X$ with zero mean, and  with prescribed 
variances $c_{ii}=Cov(X_i,X_{i}), i = 1,2,\dots,n$
  and covariances $c_{i,i+1} = Cov(X_i,X_{i+1}), i = 1,2,\dots,n-1$.
We will prove Theorem 2, which says that 
among all such random vectors $X$
the differential entropy is maximal for the unique Markov Gaussian
  vector.
\begin{proof}
  By Proposition 7 it is sufficient to compare differential
  entropies only for Gaussian vectors. By Lemma 6 we need to maximize
  the determinant of the covariance matrix. By rescaling we can restrict
  our attention to random vectors with all variances equal to 1. 
 
  Translating to the euclidean language we have a sequence of unit vectors
  $(e_1,e_2,\dots ,e_n)$ with prescribed scalar products
  $\langle e_i,e_{i+1}\rangle$ and we want to establish what is the maximum
  volume of the parallelopiped spanned by these vectors.
  We achieve it by the induction on the dimension $n$.  For $n=2$
  the area is actually constant and it is equal to
  $V_2=\sqrt{1-\langle e_1,e_{2}\rangle^2}$. The Markov property for 
$n=2$ is void. 
  
  Let us assume that
  the Proposition holds
  for some $n\geq 2$, and that the above maximal volume is
  equal to $V_n=\sqrt{\prod_{i=1}^{n-1}
    \left(1-\langle e_i,e_{i+1}\rangle^2\right)}$.
  Let $e_{n+1} = u+v$, where $u$ is the orthogonal projection of $e_{n+1}$
  onto the subspace $S_n$, i.e., the span of $\{e_1,e_2,\dots, e_n\}$.
  It is clear that the maximal volume for $n+1$ unit vectors
  $\{e_1,e_2,\dots, e_n,e_{n+1}\}$
  is equal to $V_n$ times the maximal possible length of the vector $v$.
  Since $\langle e_n,e_{n+1}\rangle$ is fixed, the maximum length of the
  vector $v$ is equal to $\sqrt{1-\langle e_n,e_{n+1}\rangle^2}$
  and it is assumed only for vectors $e_{n+1}$ for which the orthogonal
  projection $u$ is parallel to $e_n$. Hence by induction we obtain
  the Markov property for the sequence of unit vectors with the maximum
  volume.
  \end{proof}


\begin{thebibliography}{9999}


\bibitem[C-C]{C-C}  B. S. Choi, T. M. Cover, 
  \emph{A Proof of Burg’s Theorem}
  In: Smith C.R., Erickson G.J. (eds) Maximum-Entropy and Bayesian Spectral Analysis and Estimation Problems. Fundamental Theories of Physics, vol 21.
  Springer, Dordrecht, 1987.

 


\bibitem[C-T]{C-T}  T. M. Cover, J. A. Thomas,
  \emph{ Elements of Information Theory.} New York: Wiley, 1991.

 

\bibitem[B]{B} W. W. Barrett, 
  \emph{A theorem on inverses of tridiagonal matrices.}
  Linear Algebra Appl. \textbf{27} (1979), 211-217.


  
\bibitem[B-F]{B-F} W. W. Barrett, Ph. J.  Feinsilver,
\emph{Gaussian families and a theorem on patterned matrices.}
J. Appl. Probab. \textbf{15} (1978), no. 3, 514-522.


\bibitem[W]{W} M. P. Wojtkowski, \emph{Hamiltonian systems with linear potential and elastic constraints.}, Fund. Math. \textbf{157} (1998), 305-341.

\bibitem[V-B-G-M]{V-B-G-M}
R.  Vandebril,  M. Van Barel,  G. Golub,  N. Mastronardi, 
  \emph{   A bibliography on semiseparable matrices.}
  Calcolo \textbf{42} (2005), no. 3-4, 249-270.
  
  

\end{thebibliography}
\end{document}